\newtheorem{thm}{Theorem}
\newtheorem{lemma}[thm]{Lemma}
\theoremstyle{definition}
\theoremstyle{remark}
\newcommand{\R}{\mathds{R}}
\newcommand{\Z}{\mathds{Z}}
\newcommand{\CC}{\mathcal{C}}
\newcommand{\EE}{\mathcal{E}}
\newcommand{\KK}{\mathcal{K}}
\renewcommand{\mathcal}{\mathscr}
\renewcommand{\le}         {\leqslant}
\renewcommand{\ge}         {\geqslant}
\begin{document}

\author{Ovidiu Savin and Enrico Valdinoci}
\thanks{OS has been supported by 
NSG grant 0701037. EV has been
supported
by
FIRB 
project ``Analysis and Beyond''
and GNAMPA
project
``Equa\-zio\-ni non\-li\-nea\-ri su va\-rie\-t\`a:
pro\-prie\-t\`a qua\-li\-ta\-tive e clas\-si\-fi\-ca\-zio\-ne
del\-le so\-lu\-zio\-ni''.
Part of this work
was carried out
while EV was visiting Columbia
University. We thank Guido de Philippis for an interesting
discussion.}

\title[Density estimates
via the Sobolev inequality
]{Density estimates \\ for a nonlocal variational model \\
via the Sobolev inequality}

\begin{abstract}
We consider the minimizers of the energy 
$$ \|u\|_{H^s(\Omega)}^2+\int_\Omega W(u)\,dx,$$ with $s \in 
(0,1/2)$, 
where $\|u\|_{H^s(\Omega)}$ denotes the total contribution from $\Omega$ 
in the $H^s$ norm of $u$, and $W$ is a double-well potential.
By using a fractional Sobolev inequality,
we give a new proof of the fact 
that the sublevel sets 
of a minimizer~$u$ in a large ball~$B_R$ occupy
a volume comparable with the volume of~$B_R$.
\end{abstract}

\maketitle

Given~$s\in(0,1/2)$ and~$\Omega\subseteq\R^n$, with~$n\ge2$,
we define
$$ \KK(u;\Omega):=\frac{1}{2}\int_\Omega\int_\Omega\frac{
|u(x)-u(y)|^2
}{
|x-y|^{n+2s}}\,dx\,dy
+\int_\Omega\int_{\CC\Omega}\frac{
|u(x)-u(y)|^2
}{
|x-y|^{n+2s}}\,dx\,dy.$$
We take~$W$ to be a double-well potential, more
precisely, we assume that $W:[-1,1] \to 
[0, \infty)$,
\begin{equation}\label{Wcond}
W \in C^2([-1,1]), \quad W(\pm 1)=0, \quad W>0 \quad \mbox{in $(-1,1)$}
\end{equation}
$$W'(\pm 1)=0, \quad  {\mbox{and}}\quad
W''(\pm 1)>0.$$
The energy functional we are interested in is the sum
of the nonlocal contribution given by~$\KK$ and a local one
induced by~$W$, i.e., we define
\begin{equation}\label{EE}
\EE(u; \Omega):=\KK(u; \Omega) + \int_{\Omega} W(u(x))
\, dx.\end{equation}
We say that $u$ is a minimizer
in $\Omega$ if~$\EE (u;\Omega)<\infty$ and
$$ \EE (u;\Omega) \le \EE (v;\Omega)$$ for any $v$ which coincides with 
$u$ in $\CC \Omega$.
It is easy to see that minimizers satisfy an Euler-Lagrange
equation of nonlocal type, which is driven by an elliptic integral
operator of fractional type. More precisely, a minimizer~$u$
is a solution of
$$ (-\Delta)^s u(x)+W'(u(x))=0,$$
where~$(-\Delta)^s$ is the fractional power
of the positive operator~$-\Delta$, up to a normalizing multiplicative
constant, that we neglect.
More explicitly,
$$(-\Delta)^s u(x):=\int_{\R^n}\frac{u(x)-u(y)}{|x-y|^{n+2s}}dy$$
and the integral is understood in the principal value sense.

{F}rom the physical point of view, the functional in~\eqref{EE}
may be seen as a nonlocal extension of the classical Allen-Cahn
model for phase coexistence (for the latter,
see, e.g., \cite{Gur}), and 
its interfaces may be
related with suitable nonlocal minimal surfaces.
Roughly speaking, the double-well
potential~$W$ tries to drive the minimizers
towards the pure phases~$-1$ and~$+1$; on the other
hand, up to scaling the
space variables, the term~$\KK$ may be seen as
a penalization which prevents the formation of
unnecessary interfaces and makes the problem consistent
from the mathematical point of view. The main difference
between~\eqref{EE} and the classical Allen-Cahn model is that
we have here a fully nonlocal interaction~$\KK$ instead of
the usual ``kinetic term'' $\int_\Omega |\nabla u(x)|^2\,dx$.

{F}rom the mathematical point of view, the term~$\KK$
may also be regarded as the square of a (semi)norm in
a fractional Sobolev space~$H^s$ (say,~$\|u\|_{H^s(\Omega)}^2$,
see, e.g.,~\cite{BrMi}
and the more comprehensive bibliography on this quoted there).

We refer to~\cite{ADP, CRS, CV} for
the precise definitions and some basic results on
the nonlocal minimal surfaces
linked with the limit interface of
the functional in~\eqref{EE}, and to~\cite{SaV, SaV2}
for a more detailed discussion and motivation.
For related nonlocal problem of phase segregation
with physical importance, see
also~\cite{ABS, Bou, GP, Gon}.
Moreover, we recall that
the nonlocal contribution~$\KK$
may be seen as arising from a long range
interaction of particles, in connection
with some statistical mechanics model
(see, e.g.,~\cite{Orl}).

In~\cite{SaV2}, we have obtained, among others, the following
result:

\begin{thm}\label{DE}
Let~$u$ be a minimizer of $\EE$ in~$B_R$. Then for any~$\theta_1,$ 
$\theta_2\in(-1,1)$ such that
\begin{equation}\label{MUCO1}
u(0)>\theta_1, \end{equation}
we have that
\begin{equation}\label{hard}
\big| \{ u>\theta_2\}\cap B_R\big|\,\ge\, \overline{c} \,R^n
\end{equation}
if~$R\ge \overline{R}(\theta_1,\theta_2)$.

The constant~$\overline{c}>0$ 
depends only on $n$, $s$ and $W$ and~$\overline{R}(\theta_1,\theta_2)$ is 
a large constant that depends also on $\theta_1$ and $\theta_2$.
\end{thm}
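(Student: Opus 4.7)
The overall strategy is to show that the volume function $V(r):=|\{u>\theta_2\}\cap B_r|$ satisfies a discrete differential inequality forcing it to grow like $r^n$. Concretely I aim for
$$V(r+1)^{(n-2s)/n}\;\ge\;V(r)^{(n-2s)/n}+c_0\qquad\forall\,r\in[\rho_0,R-1],$$
with $c_0,\rho_0$ depending only on $n$, $s$, $W$, $\theta_1$, $\theta_2$; since $(n-2s)/n<1$, iterating this from $\rho_0$ up to $R-1$ produces $V(R)\ge\overline{c}\,R^n$, which is precisely~\eqref{hard}.

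For the base case I would exploit the hypothesis $u(0)>\theta_1$: when $\theta_2<\theta_1$, interior H\"older continuity for bounded solutions of the Euler--Lagrange equation $(-\Delta)^s u+W'(u)=0$ gives $u>\theta_2$ on an entire ball $B_{\rho_0}(0)$, so $V(\rho_0)>0$; when $\theta_2\ge\theta_1$ one chains the density estimate across finitely many intermediate thresholds $\theta_1<\eta_1<\dots<\eta_N<\theta_2$, which is what ultimately forces $\overline{R}$ to depend also on $\theta_1,\theta_2$.

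The heart of the argument is the inductive inequality. Fix $r$ and test the minimality of $u$ against the competitor $v:=\min\{u,\psi\}$, where $\psi$ is a smooth cutoff with $\psi\equiv\theta_2$ on $B_r$ and $\psi\equiv 1$ outside $B_{r+1}$. A direct bookkeeping (using that $u\equiv v$ outside $B_{r+1}$, so $\EE(u;B_R)-\EE(v;B_R)=\EE(u;B_{r+1})-\EE(v;B_{r+1})$) turns $\EE(u;B_{r+1})\le\EE(v;B_{r+1})$ into a bound of the form
$$\bigl[(u-\theta_2)_+\bigr]_{H^s(\R^n)}^2\;\le\;C\bigl(V(r+1)-V(r)\bigr)+(\text{controlled tail}),$$
because $u-v\not\equiv 0$ only on $\{u>\theta_2\}\cap B_{r+1}$ and $|W(u)-W(v)|\le\max W$ there. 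Applying the fractional Sobolev inequality
$$\|f\|_{L^{2n/(n-2s)}(\R^n)}^2\;\le\;C\,[f]_{H^s(\R^n)}^2$$
to $f:=(u-\theta_2)_+$, and noting $f\ge\theta'-\theta_2>0$ on $\{u>\theta'\}\cap B_r$ for an auxiliary threshold $\theta'\in(\theta_2,1)$, converts this $H^s$ bound into the key lower estimate $V(r)^{(n-2s)/n}\le C\bigl(V(r+1)-V(r)\bigr)$, which rearranges into the discrete ODE above.

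The main obstacle I anticipate is the careful handling of the nonlocal tails in the energy difference. Unlike the classical Dirichlet integral, $\KK$ couples every point of $B_r$ with its entire complement through the kernel $|x-y|^{-n-2s}$, so the error terms arising from the transition annulus $B_{r+1}\setminus B_r$ and from the coupling with $\CC B_{r+1}$ have to be estimated explicitly and reabsorbed into the main term; the restriction $s\in(0,1/2)$ is exactly what makes the fractional Sobolev exponent $2^*_s=2n/(n-2s)$ compatible with the target growth $V(R)\sim R^n$. A secondary subtlety is that $W$ is not monotone on $[-1,1]$, so the auxiliary threshold $\theta'$ and the transition profile of $\psi$ must be chosen so that the $W$-contribution is either a harmless error or enters with the correct sign.
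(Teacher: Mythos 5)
There is a genuine gap, and it sits in the heart of the argument. Your competitor $v=\min\{u,\psi\}$ uses a prescribed cutoff $\psi$ (equal to $\theta_2$ on $B_r$, to $1$ outside $B_{r+1}$), but this is precisely what cannot work in the nonlocal setting. The nonlocal semi-norm $\KK(\psi)$ of such a cutoff already costs an amount of order $r^{n-2s}$ that is entirely independent of the measure of $\{u>\theta_2\}$, so the energy comparison $\EE(u;B_{r+1})\le\EE(v;B_{r+1})$ cannot be turned into a bound proportional to $V(r+1)-V(r)$: the bilinear cross term between $v$ and $g:=u-v$ does not have a usable sign for a generic cutoff and cannot be ``reabsorbed'' as you suggest. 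The paper circumvents this by taking $v=\min\{u,w\}$, where $w$ is the radial barrier of Lemma~\ref{tau}, a supersolution of the fractional equation with the decay $1+w(x)\approx(R+1-|x|)^{-2s}$. The supersolution property is exactly what controls the nonlocal cross term and produces the weighted integral $\int_0^R(R+1-t)^{-2s}V'(t)\,dt$ in~\eqref{la6}; the weight records the genuinely nonlocal fact that every shell of $B_R$ contributes, just with decaying influence. Replacing the barrier by a cutoff and expecting only the boundary annulus $B_{r+1}\setminus B_r$ to matter is an implicitly local approximation, and it is exactly here that the argument breaks.

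There is also a quantitative inconsistency in your target inequalities, independent of the competitor issue. The intermediate estimate $V(r)^{(n-2s)/n}\le C\bigl(V(r+1)-V(r)\bigr)$ cannot hold for all large $r$: it forces $V(r)\gtrsim r^{n/(2s)}$, which beats $r^n$ when $s<1/2$ and so contradicts $V(r)\le|B_r|\sim r^n$. Conversely, the discrete ODE you intend to iterate, $V(r+1)^{(n-2s)/n}\ge V(r)^{(n-2s)/n}+c_0$, only yields $V(R)\gtrsim R^{n/(n-2s)}$, which is strictly weaker than $R^n$ for $n\ge2$, $s<1/2$; the exponent that would close such a telescoping argument is $1/n$, not $(n-2s)/n$. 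The correct replacement is the \emph{doubling} inequality derived in the paper, $r^{2s}V(r)^{(n-2s)/n}\le C\,V(2r)$ (formula~\eqref{o-it-2}), obtained by integrating~\eqref{la8} in $R\in[\rho,(3/2)\rho]$; the extra factor $r^{2s}$ is what makes the exponents compatible with $V\sim r^n$, and the conclusion then follows from the inductive Lemma~\ref{lemma indu} rather than from a telescoping sum. Your reading of the base case (H\"older continuity near $0$, chaining thresholds when $\theta_2\ge\theta_1$) and your identification of the Sobolev inequality as the key new ingredient are correct, but the core inequality and the iteration scheme need to be replaced by the barrier-plus-doubling mechanism.
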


Theorem~\ref{DE} follows in the streamline of the
density estimates for sublevel sets of minimizers, whose
study was started in~\cite{CC} for the Allen-Cahn equation.
Estimate~\eqref{hard} is quite meaningful in applications,
since, from the physical viewpoint, it represents
an estimate on the probability of finding a phase in
a given portion of the medium.
We refer 
to~\cite{SaV2}
for further references about related density estimates,
and for further comments about the important
consequence that these estimates have in the applications
(such as geometric results on $\Gamma$-convergence,
uniform convergence of rescaled interfaces, rigidity and
symmetry properties, etc.). 

Theorem~\ref{DE} has been proved in~\cite{SaV2}
by using a fine estimate on a weighted double integral.
The purpose of this note is to perform an alternative
proof, by using a fractional Sobolev inequality.

Such an alternative proof is given below.
Then, in the appendix, we give
a simple, essentially self-contained, proof of
the fractional Sobolev inequality in use.

The preliminary computations for the proof
of Theorem~\ref{DE}
are in common with~\cite{SaV2}, but the rest of
the proof given here
is conceptually and technically
quite different. Indeed, the proof given in~\cite{SaV2}
is more general (for instance, it works
also for the case~$s\in[1/2,1)$, in which
the limit interface of the nonlocal problem is
the classical, i.e. local, perimeter, and
the technique used also plays an important role
in the study of the $\Gamma$-convergence
of the rescaled
functional performed in~\cite{SaV}).
On the other hand,
the proof in~\cite{SaV2} is somewhat more
difficult, since it is based on an ``ad hoc''
fine measure theoretic result, namely
Theorem~1.6 there, which, roughly
speaking, estimates the energy
for making the phase change based on the
nonlocal integral contribution.
In this paper, this will be achieved more directly,
via a fractional Sobolev inequality, and
this makes the argument technically
simpler (and also
closer in spirit to the proof of~\cite{CC}
for the classical Allen-Cahn model, where
the standard Sobolev inequality was used).
\bigskip

\begin{proof}[Proof of Theorem~\ref{DE}]
First of all, we observe that, by~\eqref{Wcond},
there exists a small constant~$c>0$ such that\footnote{It may
be worth to remark that, in fact, Theorem~\ref{DE} is proven
here simply under condition~\eqref{grow}, which is weaker 
than~\eqref{Wcond}.}
\begin{equation}\label{grow}
\begin{split}
&{\mbox{$W(t)\ge W(r)+c(1+r)(t-r)+c(t-r)^2$ when
$-1\le r\le
t\le
-1+c$}}\\
&{\mbox{and $W(r)-W(t)\le (1+r)/c$ when $-1\le r\le t\le+1$.}}
\end{split}
\end{equation}
We fix~$\theta_\star:= 
\min\{\theta_1,\,\theta_2,\, -1+c\}$,
with~$c$ as in~\eqref{grow}.

Now, we recall a useful barrier
that was constructed
in Lemma~3.1 in~\cite{SaV2}:

\begin{lemma}\label{tau}
Given any~$\tau>0$, there exists
a constant $C>1$, possibly
depending on~$n$, $s$ and $\tau$,
such that the following holds: for any $R\ge
C$, there exists a rotationally symmetric function
\begin{equation*}
w\in C\big( \R^n ,[-1+CR^{-2s},\,1]\big),
\end{equation*}
with
\begin{equation*}{\mbox{
$w=1$ in~$\CC B_R$,}}\end{equation*}
such that
\begin{equation}\label{al1} -(-\Delta)^s u(x)=
\int_{\R^n}\frac{w(y)-w(x)}{{|x-y|^{n+2s}}}\,dy
\le \tau\big(1+w(x)\big)\end{equation}
and
\begin{equation}\label{al2}
\frac1C \big( R+1-|x|\big)^{-2s}
\le 1+w(x)\le C \big( R+1-|x|\big)^{-2s}
\end{equation}
for any~$x\in B_R$.
\end{lemma}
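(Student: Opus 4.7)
The plan is to construct $w$ as the solution of an auxiliary linear nonlocal Dirichlet problem and then to verify \eqref{al2} by the barrier method. Specifically, I would take $w:=1-v$, where $v$ is the unique bounded solution of
\[
(-\Delta)^s v+\tau v=2\tau\ \text{in }B_R,\qquad v\equiv 0\ \text{in }\CC B_R.
\]
By construction, \eqref{al1} then holds with equality. Existence, uniqueness (in $C(\R^n)\cap L^\infty$) and the pointwise bound $v\in[0,2]$ follow from the standard comparison principle for $(-\Delta)^s+\tau I$, comparing against the constant sub- and supersolutions $0$ and $2$; in particular $w\in[-1,1]$ is continuous, rotationally symmetric, and identically $1$ in $\CC B_R$.

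All the remaining content then lies in~\eqref{al2}, equivalently in the sharp two-sided estimate $(R+1-|x|)^{-2s}/C\leq 2-v(x)\leq C(R+1-|x|)^{-2s}$ for $x\in B_R$. I would obtain it by the barrier method: for constants $0<c_-\leq c_+$ to be chosen, define
\[
\bar v_\pm(x):=\bigl(2-c_\pm\,(R+1-|x|)^{-2s}\bigr)_+\ \text{in }B_R,\qquad \bar v_\pm\equiv 0\ \text{in }\CC B_R,
\]
and compute $(-\Delta)^s\bar v_\pm+\tau\bar v_\pm$ on $B_R$. Choosing $c_\pm$ so that $\bar v_+$ is a sub- and $\bar v_-$ a supersolution of the equation for $v$, the comparison principle yields $\bar v_+\leq v\leq \bar v_-$ on $\R^n$, which is exactly \eqref{al2} after rearranging and relabelling the constants.

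The main obstacle is the barrier check itself, since $(-\Delta)^s$ applied to the radial profile $\varphi(|x|):=(R+1-|x|)^{-2s}$ has no closed form. I would handle it by splitting the defining principal-value integral around the natural length scale $d(x):=R+1-|x|\geq 1$: the near region $|y-x|<d/2$ is treated by a second-order Taylor expansion of the smooth radial profile $\varphi$, giving a contribution $\lesssim \|D^2\varphi\|_{L^\infty}\,d^{2-2s}\sim d^{-4s}$; the far region $|y-x|>d/2$ is controlled by the absolute bound $\varphi(y)\leq 1$ together with the decay of the kernel $|x-y|^{-n-2s}$, giving a contribution $\lesssim d^{-2s}$. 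Combining these estimates with $\tau\bar v_\pm\asymp \tau$ produces the desired super/subsolution inequalities provided $d^{-2s}\leq \tau/C(n,s)$, i.e., once $R\geq C(n,s,\tau)$, which fixes the threshold appearing in the lemma. The most delicate bookkeeping occurs across $\partial B_R$, where the positive-part truncation of $\bar v_\pm$ becomes active and one must separately track the contribution of $y\in\CC B_R$ (where $\bar v_\pm(y)=0$) from that of $y\in B_R$ with $\bar v_\pm(y)>0$; this is precisely the interaction that produces the universal profile $(R+1-|x|)^{-2s}$.
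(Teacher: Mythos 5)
The paper does not prove this lemma here; it cites Lemma~3.1 of~\cite{SaV2}, where~$w$ is built as an \emph{explicit} radial profile (essentially~$w(x)=-1+2(R+1-|x|)^{-2s}$ in~$B_R$ glued to~$w\equiv1$ outside), so that~\eqref{al2} is an algebraic identity and only~\eqref{al1} requires an integral estimate. Your route is inverted: you force~\eqref{al1} to hold \emph{with equality} by solving the linear problem~$(-\Delta)^s v+\tau v=2\tau$ in~$B_R$, $v\equiv0$ outside, and then you must recover the profile~$(R+1-|x|)^{-2s}$ a posteriori via a barrier argument in order to prove~\eqref{al2}. The two are genuinely different schemes, but the computational core is identical: either way one must estimate~$\int(\varphi(y)-\varphi(x))|x-y|^{-n-2s}\,dy$ for~$\varphi(x)=(R+1-|x|)^{-2s}$, split at the scale~$d(x)=R+1-|x|$. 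Your version carries extra overhead (existence, uniqueness, rotational symmetry and boundary continuity of~$v$; a comparison principle applied to barriers that are only Lipschitz at the truncation locus) without saving any of that work, so the direct construction of~\cite{SaV2} is leaner; on the other hand your scheme does give~\eqref{al1} with equality, which is slightly stronger than what the lemma asks.

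There are two substantive gaps in the barrier check as sketched. First, for the supersolution~$\bar v_-$ (the one giving the lower bound in~\eqref{al2}) you need the one-sided \emph{lower} bound $(-\Delta)^s\bar v_-(x)\ge c_-\tau\,(R+1-|x|)^{-2s}$, but your near-field/far-field split only produces an upper bound on~$|(-\Delta)^s\bar v_\pm|$. The required positivity comes exclusively from the~$\CC B_R$ contribution, where~$\bar v_-\equiv0$, namely
\[
\int_{\CC B_R}\frac{\bar v_-(x)-\bar v_-(y)}{|x-y|^{n+2s}}\,dy=\bar v_-(x)\int_{\CC B_R}\frac{dy}{|x-y|^{n+2s}}\ \gtrsim\ \bar v_-(x)\,(R+1-|x|)^{-2s},
\]
which then has to dominate the signed near/far contributions from inside~$B_R$ (of size~$\lesssim c_-\,d(x)^{-2s}$); this dictates that~$c_-$ be \emph{small}. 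You mention~$\CC B_R$ only as ``delicate bookkeeping,'' but it is in fact the engine of the lower bound. Second, your claim that the subsolution inequality needs~$d^{-2s}\le\tau/C(n,s)$, ``i.e., once~$R\ge C(n,s,\tau)$,'' is not correct: $d=R+1-|x|$ is of order~$1$ near~$\partial B_R$ regardless of how large~$R$ is, so enlarging~$R$ cannot rescue the estimate there. What saves the subsolution~$\bar v_+$ near the boundary is taking~$c_+$ \emph{large} (of order~$1/\tau$), so that the truncation~$\bar v_+\equiv0$ is active precisely on the annulus where~$d$ is too small, making the inequality trivial there; the threshold~$R\ge C$ plays a different role (e.g., ensuring~$-1+CR^{-2s}\le1$ so the target range is nonempty). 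With these two points repaired, and the standard comparison principle for viscosity sub/supersolutions with merely Lipschitz barriers invoked, the argument can be made to close.
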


As in~\cite{SaV2}, we define
$$ A(R)\,:=\,c\,\int_{ B_R \cap \{w<u\le\theta_\star\}}
(u-w)^2\,dx \qquad{\mbox{
and
}}\qquad
V(R):=\big| \{ u>\theta_\star\}\cap
B_R\big|.$$
Also,
we fix~$K\ge 2(R_o+1)$, to be taken suitably
large in the sequel (possibly
in dependence of~$\theta_\star$), where~$R_o$ is also 
suitably large (at the end, roughly speaking,
this will give~$\overline R$
as in
the statement of Theorem~\ref{DE},
up to constants), and~$R>2K$.
We take~$w$ to be the function constructed in Lemma~\ref{tau}
with~$\tau:=c/4$, and
we define~$v(x):=\min\{u(x),w(x)\}$.

Then, from formula~(3.34) of~\cite{SaV2}, one knows that
\begin{equation}\label{la6}
\begin{split}&
\!\! A(R)+\KK(u-v;B_R)
+\frac{c}2\int_{B_R\cap \{w<u\le \theta_\star\}} (1+w)(u-w)\,dx
\\
\\ &\le C \int_{0}^R (R+1-t)^{-2s} V'(t)\,dt.
\end{split}
\end{equation}
Now, we observe that~$u-v=0$ outside~$B_R$, and
so the Sobolev-type inequality (see Theorem~\ref{SOBOLEV})
gives that
\begin{equation}\label{la7}\begin{split}&
\KK(u-v;B_R)=\KK(u-v,\R^n)\\ &\qquad
\ge c_1 \|u-v\|_{L^{2n/(n-2s)} (\R^n)}^2=
c_1 \|u-v\|_{L^{2n/(n-2s)} (B_R)}^2
\end{split}\end{equation}
for a suitable~$c_1>0$.

Now, by~\eqref{al2},
\begin{equation}\label{RON}{\mbox{
$w<-1+(1+\theta_\star)/2$ in~$B_{R-K}$,}}\end{equation}
as long as~$K$ is large enough. Hence, we have that
$$ |u-v|\ge u-v\ge u-w\ge (1+\theta_\star)/2\qquad{\mbox{
in $B_{R-K}\cap
\{ u>\theta_\star\}$}}$$
and so
\begin{eqnarray*}&&
\|u-v\|_{L^{2n/(n-2s)} (B_R)}^2 \ge\left(\int_{B_{R-K}\cap 
\{ u>\theta_\star\}}
|u-v|^{2n/(n-2s)}
\right)^{(n-2s)/n}
\\ &&\qquad\ge
\left(\int_{B_{R-K}\cap \{ u>\theta_\star\}}
((1+\theta_\star)/2)^{2n/(n-2s)}
\right)^{(n-2s)/n}
=c_2 V(R-K)^{(n-2s)/n}
,\end{eqnarray*}
for a suitable~$c_2>0$ (possibly depending on~$\theta_\star$,
hence on~$\theta_1$ and~$\theta_2$, which have been
fixed at the beginning).

Then, recalling~\eqref{la6}
and~\eqref{la7}, we obtain that
\begin{eqnarray}\label{la8}
{c_3} V(R-K)^{(n-2s)/n}\le
\int_{0}^R (R+1-t)^{-2s} V'(t)\,dt,
\end{eqnarray}
for a suitable~$c_3>0$. Now,
we integrate~\eqref{la8} in~$R\in [\rho,(3/2)\rho]$, with~$\rho\ge 2K$,
and we use that~$s\in(0,1/2)$
to obtain that
\begin{eqnarray*}&&
\frac{c_3}2\, \rho \, V(\rho-K)^{(n-2s)/n} \\&\le& c_3
\int_\rho^{(3/2)\rho} V(R-K)^{(n-2s)/n}\,dR
\\ &\le&\int_\rho^{(3/2)\rho}\left(
\int_{0}^R (R+1-t)^{-2s} V'(t)\,dt\right)\,dR\\ &\le&
\int_0^{(3/2)\rho}\left(\int_t^{(3/2)\rho}
(R+1-t)^{-2s} 
\,dR\right) V'(t)
\,dt\\
&=& \frac{1}{1-2s}
\int_0^{(3/2)\rho}\Big[
\big( (3/2)\rho+1-t\big)^{1-2s}
-
1\Big] V'(t)
\,dt \\ &\le&
\frac{ \big((3/2)\rho+1\big)^{1-2s}}{1-2s}
\int_0^{(3/2)\rho} V'(t)
\,dt\\ &\le& \frac{4^{1-2s}}{1-2s} 
\,\rho^{1-2s}\,V((3/2)\rho),
\end{eqnarray*}
that is, for any~$\rho\ge 2K$,
\begin{equation}\label{o-it}
\rho^{2s} V(\rho-K)^{(n-2s)/n}\le \tilde C 
V((3/2)\rho),\end{equation}
for a suitable~$\tilde C>0$.

We take~$r:=\rho-K$ in~\eqref{o-it},
and we obtain that
\begin{equation}\label{o-it-2}
r^{2s} V(r)^{(n-2s)/n}\le C
V(2r),\end{equation}
for a suitable~$C\ge K^2$,
as long as~$r\ge C$ (notice that~$C$
may depend on~$K$, which is now fixed
once and for all).

Now, we recall the following
general, inductive result, for the proof
of which we refer to Lemma~3.2 of~\cite{SaV2}:

\begin{lemma}\label{lemma indu}
Let~$\sigma$, $\mu\in(0,+\infty)$, $\nu\in(\sigma,+\infty)$
and~$\gamma$, $R_o$, 
$C\in(1,+\infty)$.

Let~$V:(0,+\infty)\rightarrow(0,+\infty)$ be a
nondecreasing function. For any~$r\in [R_o,+\infty)$, let
$$ \alpha(r):=\min\left\{1,\,\frac{\log V(r)}{\log r}
\right\}.$$
Suppose that
\begin{equation*}
V(R_o)\ge\mu
\end{equation*}
and
\begin{equation*}
r^\sigma\,\alpha(r)\,V(r)^{(\nu-\sigma)/\nu}\le C V(\gamma r),\qquad
{\mbox{ for any $r\in[R_o,+\infty)$.}}
\end{equation*}
Then, there exist~$c\in(0,1)$ and~$R_\star\in[R_o,+\infty)$, possibly 
depending
on~$\mu$, $\nu$, $\gamma$, $R_o$ and~$C$, such that
$$ V(r)\ge cr^\nu,\qquad
{\mbox{ for any $r\in[R_\star,+\infty)$.}}$$
\end{lemma}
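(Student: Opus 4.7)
The plan is to iterate the hypothesis along the geometric sequence $r_k:=\gamma^k R_o$, setting $V_k:=V(r_k)$ and $\alpha_k:=\alpha(r_k)$, and to track the dimensionless exponent $\beta_k:=\log V_k/\log r_k$, so that $V_k=r_k^{\beta_k}$ and $\alpha_k=\min\{1,\beta_k\}$ (the hypothesis $V(R_o)\ge\mu$, up to possibly enlarging $R_o$, ensures $V_k\ge 1$ and hence $\beta_k\ge 0$). The goal is to show that $\beta_k$ is driven toward the value $\nu$---which exceeds $1$ in the application ($\sigma=2s$, $\nu=n$)---and then to sharpen this into the pointwise bound $V_k\ge c\,r_k^{\nu}$.

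First, in the \emph{high regime} $\beta_k\ge 1$ (so $\alpha_k=1$), I would set $u_k:=\log V_k-\nu\log r_k$. Taking logarithms of the hypothesis turns it into the contractive inequality
$$u_{k+1}\,\ge\,\lambda\,u_k-A,\qquad\lambda:=\frac{\nu-\sigma}{\nu}\in(0,1),\quad A:=\nu\log\gamma+\log C,$$
which iterates to $u_k\ge -A/(1-\lambda)+\lambda^{k}\bigl(u_0+A/(1-\lambda)\bigr)$. Hence $u_k$ remains bounded below uniformly in $k$, which gives $V_k\ge c\,r_k^{\nu}$ at every step of the high regime.

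To ensure that the high regime is eventually entered, I would \emph{bootstrap} using the $\alpha$ factor. In the complementary regime $\beta_k<1$, where $\alpha_k=\beta_k$, taking logarithms of the recursion and dividing by $\log r_{k+1}$ yields, for $k$ large,
$$\beta_{k+1}\,\ge\,\sigma+\lambda\beta_k+o(1).$$
The affine map $x\mapsto\sigma+\lambda x$ has unique attracting fixed point $\sigma/(1-\lambda)=\nu>1$, so $\beta_k$ is pushed past the threshold $1$ in finitely many steps, at which point the high-regime argument takes over. A final monotonicity step extends the bound from the geometric sequence to all $r\ge R_\star$: for $r\in[r_k,r_{k+1}]$ one has $V(r)\ge V_k\ge c\,r_k^{\nu}\ge c\gamma^{-\nu}r^{\nu}$.

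The main obstacle will be the bootstrap: the factor $\alpha(r)$ weakens the hypothesis precisely in the transitional range where $V$ is still far from $r^{\nu}$, and one must verify that the approximate dynamics $\beta_{k+1}\gtrsim\sigma+\lambda\beta_k$ genuinely drives $\beta_k$ above $1$ in finitely many iterations---in particular, controlling the lower-order $\log\beta_k-\log C$ error terms, which is what forces $R_o$ (and hence $\log r_k$) to be taken sufficiently large in terms of $\mu$, $\gamma$ and $C$.
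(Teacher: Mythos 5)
The paper does not prove this lemma; it defers to Lemma~3.2 of~\cite{SaV2}. So there is no in-paper argument against which to compare your proposal, and what follows is an assessment of it on its own terms.

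Your overall skeleton is the natural one and matches what an iterative scheme for such a ``discrete ODE'' lemma should look like: iterate along $r_k=\gamma^k R_o$, write $V_k=r_k^{\beta_k}$, obtain the affine contraction $u_{k+1}\ge\lambda u_k-A$ for $u_k=\log V_k-\nu\log r_k$ with $\lambda=(\nu-\sigma)/\nu$, $A=\nu\log\gamma+\log C$ when $\alpha_k=1$, bootstrap $\beta_k$ upward via $\beta_{k+1}\gtrsim\sigma+\lambda\beta_k$ when $\alpha_k=\beta_k$, and pass from the geometric sequence to all $r$ by monotonicity. The high-regime algebra and the final interpolation step are correct. That said, there are concrete gaps.

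\emph{(a) ``Enlarging $R_o$'' does not ensure $V_k\ge1$.} If $V\equiv\mu$ with $\mu\le 1$ then $\alpha(r)\le 0$ for every $r>1$, the hypothesis is vacuously true, and the conclusion fails; no choice of $R_o$ helps. What your argument actually requires is a bound $V\ge\mu'>1$, which in turn gives $\beta_k\ge\log\mu'/\log r_k>0$ and hence $\log\beta_k\ge \log\log\mu'-\log\log r_k$. This quantitative positivity of $\beta_k$ is exactly what makes the error $\log\beta_k/\log r_{k+1}$ in the bootstrap $o(1)$; ``$\beta_k\ge 0$'' is not enough, since $\beta_k=0$ annihilates the hypothesis. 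You should make the lower bound $\mu'>1$ explicit rather than gesture at enlarging $R_o$.

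\emph{(b) The case $\nu\le1$ is not covered.} The lemma assumes only $\nu\in(\sigma,\infty)$ with $\sigma>0$, so $\nu\le 1$ is permitted. Then the fixed point $\sigma/(1-\lambda)=\nu$ of the bootstrap map is $\le 1$, $\beta_k$ never crosses the threshold $1$, and the ``high-regime argument takes over'' step never occurs. The conclusion $V(r)\ge cr^\nu$ is still true, but it must then be extracted from the low-regime recursion $u_{k+1}\ge\lambda u_k+\log\beta_k-A$ directly, using that the bootstrap keeps $\beta_k$ eventually bounded away from $0$ (so $\log\beta_k$ is bounded below). Either add this branch, or state explicitly that you only treat the sub-case $\nu>1$ used in the paper.

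\emph{(c) Regime transitions.} When $\beta_k$ first exceeds $1$, $u_k=(\beta_k-\nu)\log r_k$ can be very negative; one must check that the iteration does not subsequently fall out of the high regime in a way that spoils the uniform lower bound on $u_k$. This does work out (for $\nu>1$ one can check $\sigma+\lambda=1+\sigma(\nu-1)/\nu>1$, so a single high-regime step keeps $\beta$ above $1$ up to an $A/\log r$ error), but it is part of the ``controlling the error terms'' you defer, and it is cleaner to note that the unified recursion $u_{k+1}\ge\lambda u_k+\log\alpha_k-A$ already gives a uniform lower bound on $u_k$ as soon as $\alpha_k$ is bounded away from $0$, without distinguishing regimes at all.
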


Then, we apply Lemma~\ref{lemma indu}
with~$\sigma:=2s$ and~$\gamma:=2$ and we deduce from~\eqref{o-it-2}
that~$V(R)\ge c_o R^n$ for large~$R$, for a suitable~$c_o\in(0,1)$.

Therefore, if we define~$\theta^\star:=\max\{\theta_1,\,\theta_2,\, 
-1+c\}$,
\begin{equation}\label{PP1}\begin{split}
& \big| \{ u>\theta^\star\}\cap B_R\big|+
\big| \{ \theta_\star<u\le \theta^\star
\}\cap B_R\big|\\
&\qquad=\big| \{ u>\theta_\star\}\cap B_R\big| = V(R)\ge c_o 
R^n\end{split}
\end{equation}
for large~$R$.
On the other hand, by Theorem~1.3 of~\cite{SaV2}, we
know that
$$ \EE(u;B_R) \le \overline{C} \,R^{n-2s},$$
for some~$\overline{C}>0$, and so
\begin{equation}\label{PP2}\begin{split}
&\overline{C} R^{n-2s}\ge \EE(u,B_R)\ge
\int_{ \{ \theta_\star<u\le \theta^\star
\}\cap B_R } W(u(x))\,dx
\\&\qquad \ge\,\inf_{r\in[\theta_\star,
\theta^\star]} W(r)\;
\big| \{ \theta_\star<u\le \theta^\star
\}\cap B_R\big|
.\end{split}\end{equation}
By~\eqref{PP1} and~\eqref{PP2},
we obtain that~\eqref{hard} holds, thus proving Theorem~\ref{DE}.
\end{proof}

\section*{Appendix -- The fractional Sobolev inequality}

For completeness, we provide here an essentially selfcontained
and elementary proof of the
Sobolev-type inequality used in this paper
(in particular, we will not make use of
neither interpolations or Besov spaces).
For a more comprehensive treatment of fractional Sobolev-type
inequalities see~\cite{BrMi} and references therein.

In this appendix,
we will fix $s\in(0,1)$
(in fact, when~$s\in[1/2,1)$ some of the statements
may be strengthened, see~\cite{Br}).
We recall an elementary estimate,
for the proof of which see, e.g., the Appendix of~\cite{SaV2}
and, in particular, Lemma~A.1 there:

\begin{lemma}\label{5yhh}
Fix $x\in\R^n$. Let $E\subset\R^n$ be a measurable set with finite
measure.
Then,
$$ \int_{\CC E}\frac{dy}{|x-y|^{n+2s}}\ge c(n,s)\,|E|^{-2s/n},$$
for a suitable constant $c(n,s)>0$.
\end{lemma}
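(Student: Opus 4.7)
The plan is to reduce the estimate to an explicit computation via a symmetric rearrangement / bathtub argument: since the weight $|x-y|^{-n-2s}$ is radially decreasing around $x$, the integral $\int_{\CC E}|x-y|^{-n-2s}\,dy$ over the complement of $E$ is minimized, among all measurable sets $E$ of a given finite measure, when $E$ is the ball centered at $x$ with $|E|=|B|$. This reduction is the conceptual heart of the proof; the remaining work is an explicit one-variable integration.

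More concretely, I would choose $r>0$ so that $|B_r(x)|=|E|$, which forces $r=(|E|/|B_1|)^{1/n}$. The key algebraic step is the identity
$$ \int_{\CC E}\frac{dy}{|x-y|^{n+2s}} - \int_{\CC B_r(x)}\frac{dy}{|x-y|^{n+2s}} = \int_{B_r(x)\setminus E}\frac{dy}{|x-y|^{n+2s}} - \int_{E\setminus B_r(x)}\frac{dy}{|x-y|^{n+2s}}. $$
Since $|E|=|B_r(x)|$, the two sets $B_r(x)\setminus E$ and $E\setminus B_r(x)$ have equal measure. On the first, the integrand is at least $r^{-n-2s}$; on the second, it is at most $r^{-n-2s}$. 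Hence the right-hand side is non-negative, giving
$$ \int_{\CC E}\frac{dy}{|x-y|^{n+2s}} \,\ge\, \int_{\CC B_r(x)}\frac{dy}{|x-y|^{n+2s}}. $$

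The remaining integral is explicit: passing to spherical coordinates centered at $x$ yields
$$ \int_{\CC B_r(x)}\frac{dy}{|x-y|^{n+2s}} = c_n\int_r^{+\infty}\rho^{-1-2s}\,d\rho = \frac{c_n}{2s}\,r^{-2s}, $$
where $c_n$ denotes the surface area of the unit sphere in $\R^n$; substituting $r=(|E|/|B_1|)^{1/n}$ produces a constant multiple of $|E|^{-2s/n}$, which is the claim.

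I do not expect any serious obstacle. The only minor subtlety is that the argument has to tolerate the singularity of the integrand at $x$: if $x\in E$, both integrals in the comparison are finite and the pointwise inequalities on $B_r(x)\setminus E$ and $E\setminus B_r(x)$ are harmless; if $x\notin E$, the left-hand side of the claim is already $+\infty$, since $\CC E$ contains a neighborhood of $x$, and the inequality is trivially satisfied. No approximation, truncation, or interpolation is needed.
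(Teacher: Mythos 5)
Your argument is correct. Note that the present paper does not prove Lemma~\ref{5yhh}: it defers to Lemma~A.1 in the Appendix of \cite{SaV2}, so there is no proof within this paper to compare against. Your symmetric-rearrangement argument, which identifies the ball $B_r(x)$ with $|B_r(x)|=|E|$ as the extremizer via the cancellation $\int_{\CC E}-\int_{\CC B_r(x)}=\int_{B_r(x)\setminus E}-\int_{E\setminus B_r(x)}$ together with $|B_r(x)\setminus E|=|E\setminus B_r(x)|$, is a clean way to get the sharp constant. A cruder but even shorter route, worth keeping in mind, avoids the identity entirely: choose $R$ so that $|B_R(x)|=2|E|$; then $|\CC E\cap B_R(x)|\ge|B_R(x)|-|E|=|E|$, and since $|x-y|^{-n-2s}\ge R^{-n-2s}$ on $B_R(x)$, one gets $\int_{\CC E}|x-y|^{-n-2s}\,dy\ge |E|\,R^{-n-2s}=c(n,s)|E|^{-2s/n}$ in two lines, at the cost of a worse constant. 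One small imprecision in your closing paragraph: the claim ``if $x\in E$, both integrals in the comparison are finite'' is not quite right---if $E$ has Lebesgue density strictly less than $1$ at $x$, the integral over $B_r(x)\setminus E$ can diverge. The correct dichotomy is simply that either $\int_{\CC E}|x-y|^{-n-2s}\,dy=+\infty$, in which case the inequality is trivial, or it is finite, in which case $\int_{B_r(x)\setminus E}\le\int_{\CC E}<\infty$ and $\int_{E\setminus B_r(x)}\le r^{-n-2s}|E|<\infty$, so all four terms in your identity are finite and the rearrangement comparison is legitimate. This does not affect the validity of your proof, only the phrasing of the edge case.
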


Now, we make a general observation about a useful summability
property:

\begin{lemma} \label{OS1}
Fix~$T> 1$. Let~$N\in\Z$ and
\begin{equation}\label{ak}\begin{split}\\ &{\mbox{
$a_k$
be a bounded, nonnegative, decreasing sequence}}\\&\qquad{\mbox{
with~$a_k=0$ for any $k\ge N$.}}
\end{split}\end{equation}
Then,
$$ \sum_{k\in\Z} a_k^{(n-2s)/n} T^{k}\le C(n,s,T)\,
\sum_{{k\in\Z}\atop{a_k\ne0}}
a_{k+1} a_k^{-2s/n} T^{k},
$$
for a suitable constant $C(n,s,T)>0$, independent
of~$N$.\end{lemma}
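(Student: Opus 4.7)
The statement is a weighted discrete Hardy-type inequality, and the plan is to prove it by combining Abel's summation by parts with the concavity of $t \mapsto t^\beta$, where $\beta := (n-2s)/n \in (0,1)$. Since $a_k$ is bounded, nonnegative, and vanishes for $k \geq N$, Abel's transformation applied to $\sum_k a_k^\beta T^k$ yields the identity
$$(T-1) \sum_k a_k^\beta T^k = \sum_k (a_{k-1}^\beta - a_k^\beta)\, T^k,$$
with boundary contributions vanishing at $k \to -\infty$ (since $a_k$ is bounded and $T^k \to 0$) and at $k \to +\infty$ (since $a_k$ is eventually zero, modulo one explicit boundary term $a_{N-1}^\beta T^N$ which I track separately). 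For each $k$ with $a_k > 0$, the tangent-line estimate coming from the concavity of $t \mapsto t^\beta$ gives $a_{k-1}^\beta - a_k^\beta \leq \beta\, a_k^{\beta-1}(a_{k-1} - a_k)$, and this rewrites the sum in a form involving products $a_{k-1}\, a_k^{\beta-1}$ that are close in shape to the right-hand side of the lemma.

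The essential difficulty is that these products involve $a_{k-1}$, whereas the right-hand side of the lemma involves $a_{k+1}$; since $a_{k-1} \geq a_k \geq a_{k+1}$, the gap can be large when the sequence exhibits a ``big drop''. My plan is to partition the indices using a small threshold $\tau \in (0,1)$, chosen only in terms of $T$ and $\beta$: call $k$ \emph{slow} if $a_{k+1} \geq \tau a_k$ and \emph{fast} otherwise. For slow indices one has $a_{k-1} \leq a_k/\tau$ at the preceding slow transition, so a direct term-wise comparison against the right-hand side term at $k-1$ closes the estimate up to a constant depending on $\tau$ and $T$.

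The main obstacle is handling runs of consecutive fast indices. Along such a run $k_0, k_0+1, \ldots, k_0+m$ one has $a_{k_0+i} \leq \tau^i a_{k_0}$, so the local left-hand side contribution $\sum_i a_{k_0+i}^\beta T^{k_0+i}$ is dominated by a geometric series with ratio $T\tau^\beta$. Choosing $\tau$ small enough that $T\tau^\beta < 1$ makes this series convergent with sum of order $a_{k_0}^\beta T^{k_0}$, and this total can then be absorbed into the single right-hand side term $a_{k_0}\, a_{k_0-1}^{\beta-1}\, T^{k_0-1}$ at the preceding slow index $k_0-1$, which is itself of order $a_{k_0}^\beta T^{k_0-1}$. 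The residual boundary contribution $a_{N-1}^\beta T^N/(T-1)$ from the Abel summation is controlled by the same mechanism, comparing it to the right-hand side term at $k = N-2$ according to whether the last nonzero index is slow or fast. Summing over all fast runs and all slow indices then yields the desired bound with a constant $C = C(n,s,T)$ depending only on $n$, $s$, and $T$.
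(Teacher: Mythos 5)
Your argument is correct, but it follows a genuinely different route from the paper's. The paper shifts indices to write $\tfrac{1}{T}\sum_k a_k^{(n-2s)/n}T^k=\sum_k a_{k+1}^{(n-2s)/n}T^k$, restricts to $a_k\ne0$, splits each summand as a product and applies H\"older's inequality with the conjugate pair $\alpha=n/(2s)$, $\beta=n/(n-2s)$; this produces the self-improving bound $\tfrac{1}{T}S\le S^{2s/n}R^{(n-2s)/n}$ with $S$ and $R$ the left and right sides, and dividing (legitimate because the hypotheses make both series finite) gives $S\le T^{n/(n-2s)}R$ in a few lines. Your slow/fast decomposition is a valid alternative: calling $k$ slow if $a_{k+1}\ge\tau a_k$, the slow terms satisfy $a_k^{(n-2s)/n}T^k\le\tau^{-1}a_{k+1}a_k^{-2s/n}T^k$ termwise, while along a maximal fast run starting at $k_0$ one has $a_{k_0+i}\le\tau^i a_{k_0}$, so the run's left-hand contribution is a geometric series with ratio $T\tau^{(n-2s)/n}$, which is summable once $\tau$ is chosen small in terms of $T$ and $s$ and whose total is then absorbed (with constant $T\tau^{-2s/n}/(1-T\tau^{(n-2s)/n})$) into the single right-hand term at the preceding slow index $k_0-1$; boundedness of $a_k$ guarantees every fast run has such a predecessor, and the vanishing for $k\ge N$ makes the last term harmless. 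What each buys: the paper's proof is shorter and exploits the precise conjugacy of the exponents, whereas yours is more elementary and robust in that it only uses monotonicity and the subunit exponent $(n-2s)/n<1$. One remark: the Abel-summation-plus-concavity preamble in your first paragraph is not actually used — your second and third paragraphs bound $\sum_k a_k^{(n-2s)/n}T^k$ directly without passing through the transformed sum $\sum_k a_{k-1}a_k^{-2s/n}T^k$ — so you could drop that preamble entirely and streamline the write-up.
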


\begin{proof} 
By~\eqref{ak},
\begin{equation}\label{are c}
{\mbox{both }}
\sum_{k\in\Z} a_k^{(n-2s)/n} T^{k}
{\mbox{ and }}
\sum_{{k\in\Z}\atop{a_k\ne0}}
a_{k+1} a_k^{-2s/n} T^{k}
{\mbox{ are convergent series.}}
\end{equation}
Moreover, since $a_k$ is nonnegative and
decreasing, we have that
if $a_{k}=0$, then $a_{k+1}=0$. Accordingly,
$$ \sum_{k\in\Z} a_{k+1}^{(n-2s)/n} T^{k}=
\sum_{{k\in\Z}\atop{a_k\ne 0}} a_{k+1}^{(n-2s)/n} T^{k}.$$
Therefore, we use the H\"older
inequality with exponents $\alpha:=n/2s$ and $\beta:=n/(n-2s)$
as follows:
\begin{eqnarray*}
&& \frac{1}{T}
\sum_{k\in\Z} a_{k}^{(n-2s)/n} T^{k}
=
\sum_{k\in\Z} a_{k+1}^{(n-2s)/n} T^{k}\\&&\qquad=
\sum_{{k\in\Z}\atop{a_k\ne 0}} a_{k+1}^{(n-2s)/n} T^{k}
\\&&\qquad=
\sum_{{k\in\Z}\atop{a_k\ne0}} \Big( a_{k}^{2s/(n\beta)}
T^{k/\alpha}\Big) \Big( a_{k+1}^{1/\beta} a_{k}^{-2s/
(n\beta)} T^{k/\beta}\Big)\\
&&\qquad\le
\left(
\sum_{k\in\Z} \Big( a_{k}^{2s/(n\beta)}
T^{k/\alpha}\Big)^\alpha\right)^{1/\alpha} \left(
\sum_{{k\in\Z}\atop{a_k\ne 0}}\Big( a_{k+1}^{1/\beta}
a_{k}^{-2s/(n\beta)} T^{k/\beta}\Big)^\beta\right)^{1/\beta}
\\
&&\qquad\le
\left(
\sum_{k\in\Z} a_{k}^{(n-2s)/n} T^{k}\right)^{2s/n} \left(
\sum_{{k\in\Z}\atop{a_k\ne0}} a_{k+1}
a_{k}^{-2s/n} T^{k} \right)^{(n-2s)/n}.
\end{eqnarray*}
So, recalling \eqref{are c},
we obtain the desired
result.
\end{proof}

We use the above tools to
deal with the measure theoretic properties of the
level sets of the functions:

\begin{lemma}\label{OS2}
Let 
\begin{equation}\label{XX}{\mbox{
$f\in L^\infty(\R^n)$ be compactly
supported. }}\end{equation}
For any $k\in\Z$ let
$$ a_k:=
\big| \{ |f|>2^k\} \big|.$$
Then,
$$ \int_{\R^n}\int_{\R^n} \frac{|f(x)-f(y)|^2}{|x-y|^{n+2s}}\,dx\,dy\ge
c(n,s) \sum_{{k\in\Z}\atop{a_k\ne0}}
a_{k+1} a_k^{-2s/n} 2^{2k},$$
for a suitable constant $c(n,s)>0$.
\end{lemma}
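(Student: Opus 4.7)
The plan is to bound the Gagliardo double integral from below by exploiting the dyadic level-set structure built into the definition of~$a_k$, together with Lemma~\ref{5yhh}. I would define the level sets~$E_k:=\{|f|>2^k\}$, so that~$a_k=|E_k|$, and the dyadic annuli~$D_k:=E_{k+1}\setminus E_{k+2}=\{2^{k+1}<|f|\le 2^{k+2}\}$, which are pairwise disjoint as~$k$ varies over~$\Z$ and satisfy~$|D_k|=a_{k+1}-a_{k+2}$. For any~$x\in D_k\subset E_{k+1}$ and any~$y\in\CC E_k$, the reverse triangle inequality gives~$|f(x)-f(y)|\ge|f(x)|-|f(y)|>2^{k+1}-2^k=2^k$, regardless of the signs of~$f$.

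Using the disjointness of the~$D_k$'s, I would drop the Gagliardo double integral to a sum of pieces over~$D_k\times\CC E_k$, estimate~$|f(x)-f(y)|^2\ge 2^{2k}$ on each piece, and then apply Lemma~\ref{5yhh} with~$E:=E_k$ to the inner~$y$-integral (uniformly in~$x$). This delivers
$$\int_{\R^n}\!\int_{\R^n}\frac{|f(x)-f(y)|^2}{|x-y|^{n+2s}}\,dx\,dy\;\ge\; c(n,s)\sum_{{k\in\Z}\atop{a_k\ne0}}2^{2k}(a_{k+1}-a_{k+2})\,a_k^{-2s/n},$$
where the restriction~$a_k\ne0$ is harmless since any index with~$a_k=0$ also has~$a_{k+1}=a_{k+2}=0$ by monotonicity. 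The estimate has the desired weight~$2^{2k}a_k^{-2s/n}$, but it involves the \emph{differences}~$a_{k+1}-a_{k+2}$ in place of~$a_{k+1}$.

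To bridge this gap I would perform a discrete summation by parts. Setting~$b_k:=2^{2k}a_k^{-2s/n}$, telescoping gives (formally)
$$\sum_k(a_{k+1}-a_{k+2})\,b_k=\sum_k a_{k+1}(b_k-b_{k-1}).$$
Since~$a_k$ is nonincreasing, $a_{k-1}^{-2s/n}\le a_k^{-2s/n}$, and therefore
$$b_k-b_{k-1}\ge 2^{2k}a_k^{-2s/n}-2^{2(k-1)}a_k^{-2s/n}=\frac{3}{4}\,b_k,$$
which produces the desired lower bound up to a universal factor.

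The main technical point is making this Abel summation rigorous in the presence of the ``$a_k\ne0$'' restriction. I would truncate the sum to a finite range~$k\in[K_-,K_+]$ and verify that the boundary terms vanish in the limit: the upper tail is immediate from the~$L^\infty$ hypothesis~\eqref{XX}, which forces~$a_k=0$ for~$k$ large; for the lower tail, the compact support of~$f$ provides the uniform bound~$a_k\le|\mathrm{supp}\,f|<\infty$, and together with the prefactor~$2^{2k}$ this drives the boundary contribution to zero as~$k\to-\infty$. Everything else amounts to bookkeeping of absolute constants.
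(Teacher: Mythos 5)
Your proof is correct. The geometric setup is the same as the paper's: after replacing $f$ by $|f|$ (which you do implicitly via the reverse triangle inequality), your annuli $D_k=\{2^{k+1}<|f|\le 2^{k+2}\}$ and the pairing with $y\in\CC E_k=\{|f|\le 2^k\}$ are, up to an index shift, exactly the paper's $D_i\times D_j$ decomposition restricted to $j\le i-2$, and both proofs feed this into Lemma~\ref{5yhh} to produce the factor $a_k^{-2s/n}$. Where you genuinely depart from the paper is in converting the resulting estimate, which naturally carries the differences $|D_k|=a_{k+1}-a_{k+2}$, into the stated bound with $a_{k+1}$ itself. The paper does this by writing $d_i=a_i-\sum_{\ell\ge i+1}d_\ell$, bounding the ``error'' series by the double integral again, and absorbing it into the left-hand side (their estimate~\eqref{XF}). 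You instead perform a clean discrete Abel summation with $b_k:=2^{2k}a_k^{-2s/n}$, observing that the monotonicity of $a_k$ forces $b_k-b_{k-1}\ge\tfrac34 b_k$, which gives the bound directly with no absorption. Your boundary-term analysis is correct: the $L^\infty$ bound kills the upper tail, and the finite-measure bound $a_k\le|\mathrm{supp}\,f|$ together with the geometric factor $2^{2K_-}$ kills the lower tail (note that when $f$ is nontrivial, $a_k$ increases to the positive limit $|\{|f|>0\}|$ as $k\to-\infty$, so $a_k^{-2s/n}$ stays bounded). The Abel route is arguably a little more transparent than the paper's, and it avoids the extra bookkeeping of tracking two double sums.
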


\begin{proof} Notice that
$$ \big| |f(x)|-|f(y)|\big|\le |f(x)-f(y)|,$$
and so,
by possibly replacing~$f$ with~$|f|$, we may consider the case in
which~$f\ge0$.

We define
\begin{equation}\label{t6}
A_k:=\{ |f|>2^k\}.\end{equation}
We remark that~$A_{k+1}\subseteq A_k$, hence
\begin{equation}\label{X0}
a_{k+1}\le a_k .
\end{equation}
We define
$$ D_k:=A_k\setminus A_{k+1}=\{ 2^k<f\le 2^{k+1}\}
\qquad{\mbox{
and }}\qquad d_k:=|D_k|.$$
Notice that
\begin{equation}\label{YY}{\mbox{$d_k$
and $a_k$ are bounded and they become zero when
$k$ is large enough,}}\end{equation} thanks to~\eqref{XX}. Also,
we observe that the $D_k$'s are disjoint, that
\begin{equation}\label{X1}
\bigcup_{{\ell\in\Z}\atop{\ell\le k}} D_\ell\,=\,\CC A_{k+1}
\end{equation}
and that
\begin{equation}\label{X2}
\bigcup_{{\ell\in\Z}\atop{\ell\ge k}} D_\ell\,=\, A_{k}.
\end{equation}
As a consequence of \eqref{X2}, we have that
\begin{equation}\label{YYY}
a_k=\sum_{{\ell\in\Z}\atop{\ell\ge k}} d_\ell
\end{equation}
and so
\begin{equation}\label{X3}
d_k=a_k -\sum_{{\ell\in\Z}\atop{\ell\ge k+1}} d_\ell.
\end{equation}
We stress that the series in \eqref{YYY}
is convergent,
due to~\eqref{YY}, thus so is the series
in~\eqref{X3}. Similarly, we can define
the convergent series
\begin{equation}\label{SS4}
S:=\sum_{{\ell\in\Z}\atop{
a_{\ell-1}\ne 0
}} 2^{2\ell} a_{\ell-1}^{-2s/n}d_\ell.\end{equation}
We notice that~$D_k\subseteq A_k\subseteq A_{k-1}$,
hence~$a_{i-1}^{-2s/n}
d_\ell \le a_{i-1}^{-2s/n} a_{\ell-1}$. Therefore
\begin{equation}\label{FT}\begin{split}&
\Big\{ (i,\ell)\in \Z {\mbox{ s.t. }}
a_{i-1}\ne 0 {\mbox{ and }}
a_{i-1}^{-2s/n} d_\ell \ne 0\Big\} \\ &\qquad
\,\subseteq\,
\Big\{ (i,\ell)\in \Z {\mbox{ s.t. }}
a_{\ell-1}\ne 0 \Big\}.\end{split}\end{equation}
We use \eqref{FT}
and~\eqref{X0} in the following computation:
\begin{equation}\label{X5}\begin{split}
& \sum_{ {i\in\Z}\atop{a_{i-1}\ne0}}
\sum_{ {\ell\in\Z}\atop{\ell\ge i+1} } 2^{2i} a_{i-1}^{-2s/n} d_\ell
=
\sum_{{i\in\Z}\atop{a_{i-1}\ne0}}
\sum_{ {\ell\in\Z}\atop{ {\ell\ge i+1}\atop{
a_{i-1}^{-2s/n} d_\ell \ne 0} }} 2^{2i} a_{i-1}^{-2s/n}
d_\ell
\\ &\qquad\le
\sum_{i\in\Z}
\sum_{ {\ell\in\Z}\atop{ {\ell\ge i+1}\atop{a_{\ell-1}\ne0} } }
2^{2i}  a_{i-1}^{-2s/n} d_\ell
=\sum_{ {\ell\in\Z}\atop{a_{\ell-1}\ne0} } 
\sum_{ {i\in\Z}\atop{i\le\ell-1} } 2^{2i} a_{i-1}^{-2s/n} d_\ell
\\ &\qquad\le
\sum_{{\ell\in\Z}\atop{a_{\ell-1}\ne0}} \sum_{{i\in\Z}\atop{
i\le\ell-1}}
2^{2i} a_{\ell-1}^{-2s/n} d_\ell=
\sum_{{\ell\in\Z}\atop{a_{\ell-1}\ne0}} \sum_{k=0}^{+\infty}
2^{2(\ell-1)} 2^{-2k} a_{\ell-1}^{-2s/n} d_\ell
\\ &\qquad \le 
\sum_{{\ell\in\Z}\atop{a_{\ell-1}\ne0}}
2^{2\ell} a_{\ell-1}^{-2s/n} d_\ell =S.
\end{split}\end{equation}
Now, we fix $i\in\Z$ and~$x\in D_i$: then, for any~$j\in\Z$
with~$j\le i-2$ and any~$y\in D_j$ we have that
$$ |f(x)-f(y)|\ge 2^i-2^{j+1}\ge 2^i-2^{i-1}=2^{i-1}$$
and therefore, recalling~\eqref{X1},
\begin{eqnarray*}&& \sum_{{j\in\Z}\atop{j\le i-2}}\int_{D_j}
\frac{|f(x)-f(y)|^2}{|x-y|^{n+2s}}\,dy
\ge 2^{2(i-1)}
\sum_{{j\in\Z}\atop{j\le i-2}}\int_{D_j}
\frac{dy}{|x-y|^{n+2s}}\\ &&\qquad=
2^{2(i-1)} \int_{\CC A_{i-1}}
\frac{dy}{|x-y|^{n+2s}}
.\end{eqnarray*}
This and Lemma~\ref{5yhh}
imply that, for any~$i\in\Z$
and any~$x\in D_i$, we have that
$$ \sum_{ {j\in\Z}\atop{j\le i-2} }\int_{D_j}
\frac{|f(x)-f(y)|^2}{|x-y|^{n+2s}}\,dy
\ge c_o 2^{2i} a_{i-1}^{-2s/n},$$
for a suitable~$c_o>0$.

As a consequence,
for any~$i\in\Z$,
\begin{equation}\label{X4a}
\sum_{ {j\in\Z}\atop{j\le i-2} }\int_{D_i\times D_j}
\frac{|f(x)-f(y)|^2}{|x-y|^{n+2s}}\,d(x,y)
\ge c_o 2^{2i} a_{i-1}^{-2s/n} d_i,
\end{equation}
where~$d(x,y)$ denotes the volume element for
the product Lebesgue measure on~$\R^n\times\R^n$.
Therefore, by~\eqref{X3}, we conclude that,
for any~$i\in\Z$,
\begin{equation}\label{X4}
\sum_{ {j\in\Z}\atop{j\le i-2} }\int_{D_i\times D_j}
\frac{|f(x)-f(y)|^2}{|x-y|^{n+2s}}\,d(x,y)
\ge c_o \left[ 2^{2i} a_{i-1}^{-2s/n} a_i
-\sum_{{\ell\in\Z}\atop{\ell\ge i+1}
}2^{2i} a_{i-1}^{-2s/n} d_\ell 
\right] .\end{equation}
By \eqref{SS4} and \eqref{X4a}, we have that
\begin{equation}\label{X5bis}
\sum_{{i\in\Z}\atop{a_{i-1}\ne0}}
\sum_{{j\in\Z}\atop{j\le i-2}}\int_{D_i\times D_j}
\frac{|f(x)-f(y)|^2}{|x-y|^{n+2s}}\,d(x,y)
\ge c_o S.\end{equation}
Then, using~\eqref{X4}, \eqref{X5} and~\eqref{X5bis},
\begin{eqnarray*}
&& \sum_{{i\in\Z}\atop{a_{i-1}\ne0}}
\sum_{{j\in\Z}\atop{j\le
i-2}}\int_{D_i\times D_j}
\frac{|f(x)-f(y)|^2}{|x-y|^{n+2s}}\,d(x,y)
\\ &\ge&
c_o \left[ \sum_{{i\in\Z}\atop{a_{i-1}\ne0}}
2^{2i} a_{i-1}^{-2s/n} a_i
\,-\,
\sum_{{i\in\Z}\atop{a_{i-1}\ne0}}
\sum_{{\ell\in\Z}
\atop{\ell\ge i+1}}2^{2i} a_{i-1}^{-2s/n} d_\ell\right]
\\ &\ge& c_o
\left[ \sum_{{i\in\Z}\atop{a_{i-1}\ne0}}
2^{2i} a_{i-1}^{-2s/n} a_i
\,-\, S
\right]
\\ &\ge&  c_o
\sum_{{i\in\Z}\atop{a_{i-1}\ne0}}
2^{2i} a_{i-1}^{-2s/n} a_i
-
\sum_{{i\in\Z}\atop{a_{i-1}\ne0}}
\sum_{{j\in\Z}\atop{j\le
i-2}}\int_{D_i\times D_j}
\frac{|f(x)-f(y)|^2}{|x-y|^{n+2s}}\,d(x,y).
\end{eqnarray*}
That is, by taking the last term to
the left hand side,
\begin{equation}\label{XF}
2 \sum_{{i\in\Z}\atop{a_{i-1}\ne0}}
\sum_{{j\in\Z}\atop{j\le
i-2}}\int_{D_i\times D_j}
\frac{|f(x)-f(y)|^2}{|x-y|^{n+2s}}\,d(x,y)
\,\ge\,
c_o
\sum_{{i\in\Z}\atop{a_{i-1}\ne0}}
2^{2i} a_{i-1}^{-2s/n} a_i.
\end{equation}
On the other hand, by symmetry,
\begin{equation}\label{XFF}\begin{split}
& \int_{\R^n\times\R^n}
\frac{|f(x)-f(y)|^2}{|x-y|^{n+2s}}\,d(x,y) =
\sum_{{i,j\in\Z}} \int_{D_i\times D_j}
\frac{|f(x)-f(y)|^2}{|x-y|^{n+2s}}\,d(x,y)
\\ &\qquad =2
\sum_{{i,j\in\Z}\atop{j\le i}}
\int_{D_i\times D_j}
\frac{|f(x)-f(y)|^2}{|x-y|^{n+2s}}\,d(x,y)
\\ &\qquad\ge 2
\sum_{{i\in\Z}\atop{a_{i-1}\ne0}}
\sum_{{j\in\Z}\atop{j\le
i-2}}\int_{D_i\times D_j}
\frac{|f(x)-f(y)|^2}{|x-y|^{n+2s}}\,d(x,y).
\end{split}
\end{equation}
Then, the desired result plainly follows
from~\eqref{XF} and~\eqref{XFF}.
\end{proof}

With the above estimates, we are now in the position of
completing the
elementary proof of the Sobolev-type inequality exploited in our paper:

\begin{thm}\label{SOBOLEV}
Let $s\in(0,1)$.
Let $f:\R^n\rightarrow\R$ be measurable and compactly
supported.
Then,
\begin{equation}\label{CA4} \| f\|_{L^{2n/(n-2s)} (\R^n)}^2\le C(n,s)
\int_{\R^n} \int_{\R^n} \frac{|f(x)-f(y)|^2}{|x-y|^{n+2s}}
\,dx\,dy,\end{equation}
for a suitable constant $C(n,s)>0$.
\end{thm}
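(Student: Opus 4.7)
The plan is to combine Lemmas~\ref{OS1} and~\ref{OS2} via a dyadic decomposition of the level sets of~$f$, after reducing to the case in which $f$ is bounded and non-negative. Non-negativity is achieved by replacing $f$ with~$|f|$: the $L^p$ norm is unchanged and the Gagliardo seminorm can only decrease, since the absolute value is a pointwise contraction. Boundedness is then obtained by truncating $|f|$ at level $M$ and sending $M\to+\infty$: truncation is a contraction, so the right-hand side of~\eqref{CA4} is monotone in~$M$, while the left-hand side converges to $\|f\|_{L^{2n/(n-2s)}(\R^n)}^2$ by monotone convergence.

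With $f\ge 0$, bounded and compactly supported, I would set $a_k:=|\{f>2^k\}|$ for $k\in\Z$, together with $D_k:=\{2^k<f\le 2^{k+1}\}$ and $d_k:=|D_k|$. Then $\{a_k\}$ is non-negative, decreasing, bounded by the measure of the support of~$f$, and vanishes as soon as $2^k>\|f\|_{L^\infty}$, so the assumption~\eqref{ak} of Lemma~\ref{OS1} is met. Writing $p:=2n/(n-2s)$ and observing that $2/p=(n-2s)/n$, I would decompose
$$ \|f\|_{L^p(\R^n)}^p \,=\, \sum_{k\in\Z}\int_{D_k} f^p\,dx \,\le\, 2^p\sum_{k\in\Z} 2^{kp}d_k \,\le\, 2^p\sum_{k\in\Z} 2^{kp}a_k. $$
Since $p\ge 2$, the elementary inequality $\bigl(\sum b_k\bigr)^{2/p}\le \sum b_k^{2/p}$ for non-negative sequences (which follows from $(b_k/S)^{2/p}\ge b_k/S$ with $S:=\sum b_k$, summing, and raising to the $p/2$) applied to $b_k:=2^{kp}a_k$ gives
$$ \|f\|_{L^p(\R^n)}^2 \,\le\, C\sum_{k\in\Z} 2^{2k}\,a_k^{(n-2s)/n}. $$

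To finish, I would invoke Lemma~\ref{OS1} with $T:=4$ to dominate the right-hand side by $C\sum_{{k\in\Z}\atop{a_k\ne0}} a_{k+1}\,a_k^{-2s/n}\,2^{2k}$, and then Lemma~\ref{OS2} to bound that last sum from above by a multiple of the Gagliardo seminorm of~$f$, producing~\eqref{CA4}. I do not anticipate a serious obstacle, since the two preceding lemmas already carry out the bulk of the analytic work; the only delicate checks are the monotonicity of the Gagliardo seminorm under truncation and the verification that $\{a_k\}$ satisfies the hypotheses of Lemma~\ref{OS1}, after which the theorem follows by a short chain of elementary inequalities.
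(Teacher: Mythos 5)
Your proof is correct and follows essentially the same route as the paper: a dyadic decomposition of the level sets, the subadditivity of $t\mapsto t^{(n-2s)/n}$, and then Lemma~\ref{OS1} (with $T=4$) followed by Lemma~\ref{OS2}. The only cosmetic difference is in the reduction to bounded $f$: you use the contraction property of truncation together with monotone convergence, while the paper first assumes the right-hand side finite and then invokes Fatou and dominated convergence; both work, and yours is marginally more streamlined.
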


\begin{proof} Of course, we may suppose that
\begin{equation}\label{suppose}{\mbox{
the right hand side
of \eqref{CA4} is finite,}}\end{equation}
otherwise we are done.

We will prove~\eqref{CA4} under the additional assumption that
\begin{equation}\label{vega}
f\in L^\infty(\R^n).\end{equation} This 
does not affect the generality
of the result, because if~\eqref{CA4} holds for bounded functions,
then it holds for the function~$f_N$ obtained by~$f$ by
cutting at levels~$-N$ and~$+N$. Then,
denoting by~$|f|_N$ the function obtained
by cutting~$|f|$ at level~$N$, we see that~$|f|_N=|f_N|$,
so we obtain from
the Fatou Lemma that
\begin{eqnarray}\label{991}\nonumber
&& \liminf_{N\rightarrow+\infty} \| f_N\|_{L^{2n/(n-2s)}}
= \liminf_{N\rightarrow+\infty} \left( \int_{\R^n}
\Big( |f|_N\Big)^{2n/(n-2s)}
\right)^{(n-2s)/(2n)} \\ &&\qquad\ge
\left( \int_{\R^n}
|f|^{2n/(n-2s)}
\right)^{(n-2s)/(2n)} =\| f\|_{L^{2n/(n-2s)}}.
\end{eqnarray}
Also, by~\eqref{suppose} and the
Dominated Convergence Theorem, we have that
\begin{equation}\label{992}
\lim_{N\rightarrow+\infty} \int_{\R^n} \int_{\R^n} 
\frac{|f_N(x)-f_N(y)|^2}{|x-y|^{n+2s}}
\,dx\,dy
=
\int_{\R^n} \int_{\R^n} \frac{|f(x)-f(y)|^2}{|x-y|^{n+2s}}
\,dx\,dy.\end{equation}
{F}rom \eqref{991}
and~\eqref{992},
one deduces~\eqref{CA4} for~$f$ from the one 
for~$f_N$,
hence we may and do assume~\eqref{vega}.

We have, using the notation in~\eqref{t6},
\begin{eqnarray*}&&\| f\|_{L^{2n/(n-2s)} (\R^n)}^{
{2n/(n-2s)}
}=\sum_{k\in\Z}
\int_{A_k\setminus A_{k+1}} |f|^{2n/(n-2s)}(x)\,dx
\\ &&\qquad\le \sum_{k\in\Z}
\int_{A_k\setminus A_{k+1}} (2^{k+1})^{2n/(n-2s)} \,dx
\le \sum_{k\in\Z} 2^{2(k+1)n/(n-2s)} a_k
.\end{eqnarray*}
That is,
$$\| f\|_{L^{2n/(n-2s)} (\R^n)}^2
\le 4 \left( \sum_{k\in\Z} 2^{2kn/(n-2s)} a_k\right)^{(n-2s)/n}
.$$
Thus, since ${(n-2s)/n}<1$,
$$\| f\|_{L^{2n/(n-2s)} (\R^n)}^2
\le 4 \sum_{k\in\Z} 2^{2k}
a_k^{(n-2s)/n}.$$
This, \eqref{vega}, Lemma \ref{OS1} (applied with~$T:=2^2$)
and Lemma~\ref{OS2} give the claim.
\end{proof}

It may be worth to
remark that, from Lemma~\ref{5yhh}, it follows that
\begin{equation}\label{CA3bis}\int_E \int_{\CC
E}\frac{dx\,dy}{|x-y|^{n+2s}}\ge
c(n,s)\,|E|^{(n-2s)/n}\end{equation}
for all
measurable sets~$E$ with finite measure.

On the other hand, we see that~\eqref{CA4} reduces to~\eqref{CA3bis} 
when~$f=\chi_E$,
so~\eqref{CA3bis} (and thus
Lemma~\ref{5yhh}) may be seen as a Sobolev-type inequality for sets.

\vfill

\vspace{1cm}

{{\sc Ovidiu Savin}

Mathematics Department, Columbia University,

2990 Broadway, New York , NY 10027, USA.

Email: {\tt savin@math.columbia.edu}
}

\vspace{1cm}

{{\sc Enrico Valdinoci}

Dipartimento di Matematica, Universit\`a di Roma Tor Vergata,

Via della Ricerca Scientifica 1, 00133 Roma, Italy.

Email: {\tt enrico@mat.uniroma3.it}
}


\begin{thebibliography}{10}

\bibitem{ABS} G.~Alberti, G.~Bouchitt\'e and P.~Seppecher,
{\em Phase transition
with the line-tension effect}, Arch. Rational Mech. Anal. 144 (1998),
no. 1, 1--46.

\bibitem{ADP} L.~Ambrosio, G.~de Philippis, L.~Martinazzi,
{\em Gamma-convergence of nonlocal perimeter functionals},
Preprint, 2010.

\bibitem{Bou} G.~Bouchitt\'e, {\em Singular perturbations of
variational problems arising
from a two-phase transition model},
Appl. Math. Optim. 21 (1990), no.
3, 289--314.

\bibitem{Br} H. Brezis, {\em
How to recognize constant functions. Connections with Sobolev spaces}, 
Uspekhi Mat. Nauk 57 (2002), no. 4(346), 
59--74;  translation in 
Russian Math. Surveys 57 (2002), no. 4, 693--708.

\bibitem{BrMi} H.~Brezis and P.~Mironescu, {\em Gagliardo-Nirenberg,
composition and
products in fractional Sobolev spaces},
J. Evol. Equ. 1 (2001), no. 4, 387--404.

\bibitem{CC} L.~A.~Caffarelli and A~C\'ordoba, {\em Uniform
convergence of a singular
perturbation problem}, Comm. Pure Appl. Math. 48 (1995), no. 1, 1--12.

\bibitem{CRS} L.~A.~Caffarelli, J.-M.~Roquejoffre and O.~Savin, {\em
Nonlocal minimal surfaces},
Comm. Pure Appl. Math. 63 (2010), no. 9, 1111--1144.

\bibitem{CV} L.~A.~Caffarelli and E.~Valdinoci, {\em
Uniform estimates and limiting
arguments
for nonlocal minimal surfaces},
To appear in Calc. Var. Partial Differential Equations.

\bibitem{Orl} N.~Dirr and E.~Orlandi, In preparation.

\bibitem{GP} A.~Garroni and G.~Palatucci, {\em
A singular perturbation result with a fractional norm},
in Variational problems in material
science,
Progress in Nonlinear Differential Equations and
Their Applications 68 (2006),
Birkh\"auser, Basel,
111--126.

\bibitem{Gon} M.~d.~M.~Gonz{\'a}lez, {\em Gamma convergence of an
energy functional related to the fractional Laplacian},
Calc. Var. Partial Differential Equations 36 (2009), no. 2, 173--210.

\bibitem{Gur} M.~E.~Gurtin, {\em
On a theory of phase transitions with interfacial energy},
Arch. Rat. Mech. Anal. 87 (1985), no. 3, 187--212.

\bibitem{SaV} O.~Savin and E.~Valdinoci, {\em $\Gamma$-convergence for 
nonlocal phase transitions}, 
Preprint, 2010.

\bibitem{SaV2} O.~Savin and E.~Valdinoci, {\em 
Density estimates for a variational model
driven by the Gagliardo norm},
Preprint, 2010, available on-line at\\ 
{\tt 
www.mat.uniroma2.it/{\raise.17ex\hbox{$\scriptstyle\mathtt{\sim}\,
$}}valdinoc/s-density-estimates.pdf}

\end{thebibliography}
\end{document}